\theoremstyle{plain}
\newtheorem{theorem}{Theorem}[section]
\newtheorem{proposition}{Proposition}[section]
\theoremstyle{definition}
\newtheorem{definition}{Definition}[section]
\newtheorem*{definition*}{Definition}
\theoremstyle{remark}
\newtheorem*{remark*}{Remark}
\newtheorem*{notation*}{Notation}
\newcommand{\tx}{\tilde{x}}
\DeclareMathOperator{\interior}{int}
\newcommand{\parts}{\mathcal{P}}
\newcommand{\Nset}{\mathbb{N}}
\newcommand{\Rset}{\mathbb{R}}
\newcommand{\tf}{\tilde{f}}
\newcommand{\tth}{\tilde{h}}
\newcommand{\lb}[1]{{\underline{ #1}}}
\newcommand{\ub}[1]{{\overline{ #1}}}
\newcommand{\ix}{{\bf x}}
\newcommand{\id}{{\bf d}}
\newcommand{\tix}{\tilde{\bf x}}
\begin{document}

	\title{Manifold approximation of set-valued functions}
	\author{Alexandre Goldsztejn}
	\date{February 21st, 2005}

\maketitle


\section{Notations}

	Sequences are denoted by $x^{(k)}$ --- although $C^\infty$ functions are involved in the sequel, their derivative are not explicitly used so this notation for sequences will not interfere with the usual one for derivatives. A function $f:\Rset^0\longrightarrow\parts(\Rset^m)$ is a subset of $\Rset^m$. Notations and theorems related to manifolds are taken from Hirsch\cite{Hirsch1976}. Points belonging to manifolds will be denoted with tildes. The closed $n$-dimensional disk is denoted by $D^n$ and the $n$-dimensional sphere by $S^n$, so $\partial D^n=S^{n-1}$.

\section{Definition of ma-continuity for set-valued maps}

	Informally, a set-valued map is ma-continuous if it can be approximated by smooth manifolds. Up to now, the definition of ma-continuity is restricted to functions defined inside boxes.
	\begin{definition}
		Consider a set-valued function $F:\ix\subseteq\Rset^n\longrightarrow\parts(\Rset^m)$. If $n>0$, $F$ is {\it ma-continuous} if and only if there exists
		\begin{itemize}
			\item a closed disk $\id\subseteq\Rset^n$ such that $\ix\subseteq\interior\id$;
			\item $M^{(k)}$ a sequence of $C^\infty$ compact $n$-manifolds such that $\partial M^{(k)}=S^{n-1}$;
			\item $g^{(k)}:M^{(k)}\longrightarrow \Rset^{n+m}$ a sequence of $C^\infty$ maps such that $g^{(k)}|_{\partial M^{(k)}}$ is a $C^\infty$ diffeomorphism between $\partial M^{(k)}$ and $(\partial\id)\times\{0\}$.
		\end{itemize}
		such that any sequence $\tx^{(k)}\in M^{(k)}$, satisfies 
		\begin{itemize}
			\item the sequence $g^{(k)}(x^{(k)})$ is bounded;
			\item if furthermore $g^{(k)}(x^{(k)})\in \ix^{(k)}\times\Rset^m$ for all $k\in\Nset$, then any accumulation point $(x^*,y^*)^T$ of the sequence $g^{(k)}(x^{(k)})$ satisfies $y^*\in F(x^*)$;
		\end{itemize}
		If $n=0$, $f$ is ma-continuous if and only if $F\neq\emptyset$.
	\end{definition}
	The condition satisfied by the boundary of the manifolds was somewhat difficult to find and can certainly be improved. It has the advantage of simplifying the proof of the JOINT property although it may turn out to be difficult to prove the PROD property --- because Cartesian products of closed disks are not easily related to disks, are they ?

\section{JOINT}

	The following theorem is slightly more restrictive than the JOINT property as it asks for a strict inclusion instead of a simple inclusion. However, the JOINT property is certainly an easy consequence of this weaker property.
	\begin{theorem}
		Let $F:\ix\subseteq\Rset^n\longrightarrow \parts(\Rset^m)$, $m>0$, $n>0$, be a ma-continuous set-valued function. Given $i\in[1..m]$ and $j\in[1..n]$, suppose that there exists an interval $\tix\subseteq\interior\ix_j$ such that
		\begin{equation}\label{eq:hypothesis}
			\forall x\in\ix \ , \ F_i(x)\subseteq\tix
		\end{equation}
		where $F_i(x)$ stands for the projection of $f(x)$ onto the $i$-axis. Then the function $G:\ix_{\neq j}\subset\Rset^{n-1}\longrightarrow\parts(\Rset^m)$ defined by
		$$G(x_{\neq j})=\{y\in F(x) \ | \ y_i=x_j\}$$
		is also ma-continuous.
	\end{theorem}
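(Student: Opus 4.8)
The plan is to realise $G$ as a slice of $F$. Writing the coordinates of $\Rset^{n+m}$ as $(x,y)=(x_1,\dots,x_n,y_1,\dots,y_m)$, the graph of $G$ is exactly the intersection of the graph of $F$ with the hyperplane $H=\{x_j=y_i\}$, pushed forward by the projection $\pi:\Rset^{n+m}\to\Rset^{(n-1)+m}$ that forgets the $x_j$-coordinate. Before slicing I would normalise: the affine change $x_j\mapsto x_j-c$, $y_i\mapsto y_i-c$ (for a chosen $c\in\interior\tix$) preserves the constraint $x_j=y_i$ and carries boxes to boxes, so ma-continuity of $F$ and hypothesis \eqref{eq:hypothesis} are untouched and I may assume $0\in\interior\tix\subseteq\interior\ix_j$. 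I would also take the disk in the ma-continuity data of $F$ to be a round ball: enlarging the given $\id$ to a round ball $\id_0\supseteq\id$ and extending each $g^{(k)}$ over a collar re-anchors the boundary to $\partial\id_0$ without disturbing the behaviour over $\ix$. Since $0$ is now strictly interior to the $j$-range of $\id_0$, the hyperplane $\{x_j=0\}$ cuts $\partial\id_0=S^{n-1}$ in a sphere $S^{n-2}$; forgetting $x_j$ identifies it with $\partial\id'$, where $\id'=\id_0\cap\{x_j=0\}\subseteq\Rset^{n-1}$, and because $(x_{\neq j},0)\in\ix\subseteq\interior\id_0$ whenever $x_{\neq j}\in\ix_{\neq j}$, one gets $\ix_{\neq j}\subseteq\interior\id'$, so $\id'$ is the disk demanded for $G$.

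The manifolds $N^{(k)}$ come from slicing. Let $\alpha^{(k)},\beta^{(k)}:M^{(k)}\to\Rset$ be the $x_j$- and $y_i$-components of $g^{(k)}$, and put $\phi^{(k)}=\alpha^{(k)}-\beta^{(k)}$. On $\partial M^{(k)}$ one has $y=0$, so $\phi^{(k)}=\alpha^{(k)}$ is the function $x_j$ read through the boundary diffeomorphism onto the round sphere $\partial\id_0\times\{0\}$, for which $0$ is a regular value; hence by Sard's theorem and a relative transversality perturbation that fixes the boundary map, I may assume $0$ is a regular value of $\phi^{(k)}$ as well. Then $N^{(k)}:=(\phi^{(k)})^{-1}(0)$ is a compact $C^\infty$ $(n-1)$-manifold with $\partial N^{(k)}=N^{(k)}\cap\partial M^{(k)}=(g^{(k)}|_{\partial M^{(k)}})^{-1}(\{x_j=0\})\cong S^{n-2}$, and $h^{(k)}:=\pi\circ g^{(k)}|_{N^{(k)}}$ is a $C^\infty$ map $N^{(k)}\to\Rset^{(n-1)+m}$ whose boundary restriction, being the boundary diffeomorphism followed by $\pi$ on the slice $\{x_j=0\}$, is a diffeomorphism onto $\partial\id'\times\{0\}$. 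This furnishes all the data required by the definition for $G$.

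It remains to verify the two limit conditions. Boundedness is immediate, since $N^{(k)}\subseteq M^{(k)}$ and $h^{(k)}(\tx^{(k)})=\pi\bigl(g^{(k)}(\tx^{(k)})\bigr)$ inherits the uniform bound that the boundedness clause forces on $g^{(k)}$. For the accumulation clause, take $\tx^{(k)}\in N^{(k)}$ with $h^{(k)}(\tx^{(k)})\in\ix_{\neq j}\times\Rset^m$ and let $(x^*_{\neq j},y^*)$ be an accumulation point; along a subsequence $g^{(k)}(\tx^{(k)})\to(x^*_{\neq j},x^*_j,y^*)$ with $x^*_j=y^*_i$ (the defining equation of $N^{(k)}$ passes to the limit) and $x^*_{\neq j}\in\ix_{\neq j}$. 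In the principal case $x^*_j\in\interior\ix_j$ one has $x_j(\tx^{(k)})\in\ix_j$ for large $k$, so $g^{(k)}(\tx^{(k)})\in\ix\times\Rset^m$ along the tail; the accumulation property of $F$ then gives $y^*\in F(x^*_{\neq j},x^*_j)$, and $y^*_i=x^*_j$ is exactly the condition that places $y^*\in G(x^*_{\neq j})$.

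The remaining case $x^*_j\notin\interior\ix_j$ is where \eqref{eq:hypothesis} is used and where I expect the real difficulty to lie. Such a limit would carry $y^*_i=x^*_j$ to the boundary of, or outside, $\ix_j$, where $G$ is empty, so these accumulation points must be excluded. The mechanism should be the following confinement fact: applying the accumulation property of $F$ to the $i$-th output shows that, for large $k$, $\beta^{(k)}=y_i$ lies within any prescribed $\epsilon$ of $\tix\subseteq\interior\ix_j$ at every point of $M^{(k)}$ whose image lies in $\ix\times\Rset^m$. On $N^{(k)}$ we have $x_j=y_i$, and the boundary values are $x_j=0\in\interior\tix$; so a sheet of $N^{(k)}$ carrying $x_j$ near or beyond $\partial\ix_j$ over $\ix_{\neq j}$ ought to connect, inside the slab $\ix_{\neq j}\times\Rset$, to this well-interior boundary behaviour, and an intermediate-value argument along $N^{(k)}$ would then force $x_j=y_i$ through $\tix+[-\epsilon,\epsilon]$ at a genuine box point, contradicting confinement. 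Making this rigorous — controlling how the sheets of the sliced manifold meet the wall $\partial(\ix_{\neq j})\times\Rset$, and handling components of $N^{(k)}$ that do not reach $\partial N^{(k)}$ — is the crux, and it is precisely here that the strict inclusion $\tix\subseteq\interior\ix_j$, rather than a bare inclusion, is indispensable.
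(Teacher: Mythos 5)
Your proposal follows the paper's overall strategy — slice the approximating manifolds $M^{(k)}$ by the constraint tying $y_i$ to $x_j$, use Sard's theorem to get regular level sets $N^{(k)}$, and project out the $j$-th coordinate — but you slice by the naive function $\phi^{(k)}=x_j-y_i$, and that choice is precisely what produces the case you cannot close. A point of $N^{(k)}$ lying over $\ix_{\neq j}$ need not have $x_j\in\ix_j$, so its image need not lie in $\ix\times\Rset^m$, and the accumulation clause in the definition of ma-continuity of $F$ then says nothing about it. Your ``confinement plus intermediate value'' sketch for the case $x^*_j\notin\interior\ix_j$ is not a proof: closed components of $N^{(k)}$, and sheets that leave the region over $\ix_{\neq j}$ through its walls, are exactly the configurations you concede you cannot control, and you say yourself that this is the crux. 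There are also two smaller soft spots: the definition of ma-continuity quantifies over sequences defined for \emph{all} $k\in\Nset$, so invoking it ``along a tail'' or along a subsequence (as you do both in the principal case and in the confinement claim) requires a filling-in argument that may fail if some $M^{(k)}$ has no point mapping into $\ix\times\Rset^m$; and perturbing $\phi^{(k)}$ to achieve regularity modifies $g^{(k)}$, which would force you to re-verify that the perturbed maps still approximate $F$, whereas one can instead keep $g^{(k)}$ fixed and choose the \emph{level} to be a regular value.

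The paper's proof eliminates the problematic case before it arises, and this is the one idea missing from your attempt. It constructs a $C^\infty$ clamp $\tth:\Rset\longrightarrow\ix_j$ with $\tth(y)=y$ for $y\in\tix$, and with values so deep inside $\ix_j$ that $\tth(y)+\epsilon\in\ix_j$ whenever $|\epsilon|\leq\alpha$, and then slices by $h(x,y)=\tth(y_i)-x_j$ at a level $\epsilon^{(k)}\to 0$, $|\epsilon^{(k)}|<\alpha$, chosen by Morse--Sard to be a regular value of $h^{(k)}$ and of $h^{(k)}|_{\partial M^{(k)}}$. On the resulting $N^{(k)}$ one has $x_j=\tth(y_i)-\epsilon^{(k)}\in\ix_j$ automatically, so $x^{(k)}_{\neq j}\in\ix_{\neq j}$ forces $g^{(k)}(\tx^{(k)})\in\ix\times\Rset^m$ for \emph{every} $k$; the accumulation clause of $F$ therefore applies to the whole sequence, hypothesis \eqref{eq:hypothesis} gives $y^*_i\in\tix$, where $\tth$ is the identity, and the limiting relation $\tth(y^*_i)=x^*_j$ becomes $y^*_i=x^*_j$, i.e.\ $y^*\in G(x^*_{\neq j})$. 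With this reparametrized slicing constraint your case distinction — and with it the unresolved second case — disappears entirely; without it, the proposal has a genuine gap at its central step.
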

	\begin{proof}[Sketch of the proof]
		First, suppose that $n>1$. Consider the $C^\infty$ function $\tth:\Rset\longrightarrow\ix_j$ defined like illustrated in the figure \ref{fig:th} --- it can be formally constructed using the function $\lambda$ defined in Hirsch\cite{Hirsch1976} page 42. As we can see, $\tth$ satisfies both
		\begin{equation}\label{eq:th-property-1}
			y\in\tix\Longrightarrow \tth(y)=y
		\end{equation}
		and
		\begin{equation}\label{eq:th-property-2}
			x=\tth(y)+\epsilon\wedge |\epsilon|\leq\alpha \ \Longrightarrow \ x\in\ix_j
		\end{equation}
		where $\alpha=\frac{1}{2}\:\min\{|\lb{\ix}_j-\lb{\tix}|,|\ub{\ix}_j-\ub{\tix}|\}$ --- we have $\alpha>0$ because by hypothesis $\tix\subseteq\interior\ix_j$. Now define the following functions.
		$$h:\Rset^{n+m}\longrightarrow \Rset \ \ ; \ \ h(x,y)=\tth(y_i)-x_j$$
		and
		$$h^{(k)}:M^{(k)}\longrightarrow\Rset \ \ ; \ \ h^{(k)}(\tx)=h(g^{(k)}(\tx))$$
		They are both $C^\infty$ because they are both composed of $C^\infty$ functions. The Morse-Sard theorem --- theorem 1.3 page 69 of Hirsch\cite{Hirsch1976} --- proves that critical values of $h^{(k)}$ and the critical values of $h^{(k)}|_{\partial M^{(k)}}$ have measure zero. So, we can pickup a sequence $\epsilon^{(k)}\in\Rset$, $0\leq |\epsilon^{(k)}|<\alpha$, which converges to zero such that $\epsilon^{(k)}$ is a regular value of both $h^{(k)}$ and $h^{(k)}|_{\partial M^{(k)}}$. Now define the sequence $N^{(k)}\subseteq M^{(k)}$ by
		$$N^{(k)}=\{\tx\in M^{(k)}|h^{(k)}(\tx)=\epsilon^{(k)}\}$$
		By the regular value theorem 4.1 page 31 of Hirsch\cite{Hirsch1976}\footnote{The dimension and compactness of the resulting manifolds are not explicitly written in the regular value theorem 4.1. However, they are easily proved to hold.}, $N^{(k)}$ is a compact $(n-1)$-submanifold of $M^{(k)}$ which satisfies $\partial N^{(k)}=N^{(k)}\cap \partial M^{(k)}$. We define $g^{\prime(k)}:N^{(k)}\longrightarrow\Rset^{n-1}\times\Rset^m$ by $g^{\prime(k)}(\tx)=(x_{\neq j},y)^T$ where $(x,y)^T=g^{(k)}(\tx)$. We now prove that the sequences $(N^{(k)})_{k\in\Nset}$ and $(g^{\prime(k)})_{k\in\Nset}$ satisfies the definition of ma-continuity for the function $G$.
		\\ \underline{First}, notice that $g^{\prime(k)}$ is a $C^{\infty}$ diffeomorphism between $\partial N^{(k)}$ and $\partial\id'\times\{0\}$ where $\id'\subseteq\Rset^{n-1}$ is a disk which contains $\ix_{\neq j}$ --- somehow because $\ix\subseteq\interior\id$ and by definition of $h$, we have $h(x,0)=x_j-c=0$ with $c\in\ix_j$, so the $(n-1)$-disk $\id'$ is the intersection of the $n$-disk $\id$ and the hyperplane $x_j-c=0$.
		\\ \underline{Second}, consider any sequence $\tx^{(k)}\in N^{(k)}$. We denote $g^{(k)}(\tx^{(k)})$ by $(x^{(k)},y^{(k)})^T$ so $g^{\prime(k)}(\tx^{(k)})=(x_{\neq j}^{(k)},y^{(k)})^T$. By definition of ma-continuous functions we know that the sequence $g^{(k)}(\tx^{(k)})$ is bounded so the sequence \underline{$g^{\prime(k)}(\tx^{(k)})$ is also bounded}. On the other hand, any accumulation point $(x^*_{\neq j},y^*)$ of the sequence $(x_{\neq j}^{(k)},y^{(k)})^T$ corresponds to at least one accumulation point $(x^*,y^*)$ of the sequence $(x^{(k)},y^{(k)})^T$. Consider any accumulation point $(x_{\neq j}^*,y^*)$ of $(x_{\neq j}^{(k)},y^{(k)})^T$ and one corresponding accumulation point $(x^*,y^*)$ of the sequence $(x^{(k)},y^{(k)})^T$. We suppose that $g^{\prime(k)}(\tx^{(k)})\in\ix_{\neq j}\times\Rset^m$ for all $k\in\Nset$ and we have to prove that $y^*\in G(x^*_{\neq j})$. We have $x^{(k)}_{\neq j}\in\ix_{\neq j}$. Now, by definition of $N^{(k)}$, we have $h^{(k)}(\tx^{(k)})=\epsilon^{(k)}$, that is $h(x^{(k)},y^{(k)})=\epsilon^{(k)}$, that is $\tth(y^{(k)}_i)=x^{(k)}_j+\epsilon_k$. This entails $\forall k\in\Nset,x^{(k)}_j\in\ix_j$ because $\epsilon_k< \alpha$ and thanks to the equation \eqref{eq:th-property-2}. Therefore, we have $x^{(k)}\in\ix$ and therefore
		$$\forall k\in\Nset, g^{(k)}(\tx^{(k)})\in\ix\times\Rset^{m}$$
		So, by definition of ma-continuity, the accumulation point $(x^*,y^*)$ of the sequence $(x^{(k)},y^{(k)})^T$ satisfies $x^*\in\ix$ because $\ix$ is closed, $y^*\in F(x^*)$ by definition of ma-continuity, and $y_i^*\in\tix$ by \eqref{eq:hypothesis}. Furthermore, because $h$ is continuous and because $h(x^{(k)},y^{(k)})=\epsilon^{(k)}$ for all $k\in\Nset$, the accumulation point $(x^*,y^*)$ also satisfies $h(x^*,y^*)=0$, that is $y^*_i=x^*_j$ because $y_i^*\in\tix$. Therefore, we have finally \underline{$y^*\in G(x^*_{\neq j})$}.
		\\ The case $n=1$ is different. It is a consequence of the fact that a compact $1$-manifold with boundary $S^0$, i.e. two points, is homeomorph to a segment. Then the proof should be achived thank to the intermediate value theorem.
	\end{proof}
		\begin{figure}[!t]
		\begin{center}
			\includegraphics[scale=0.35]{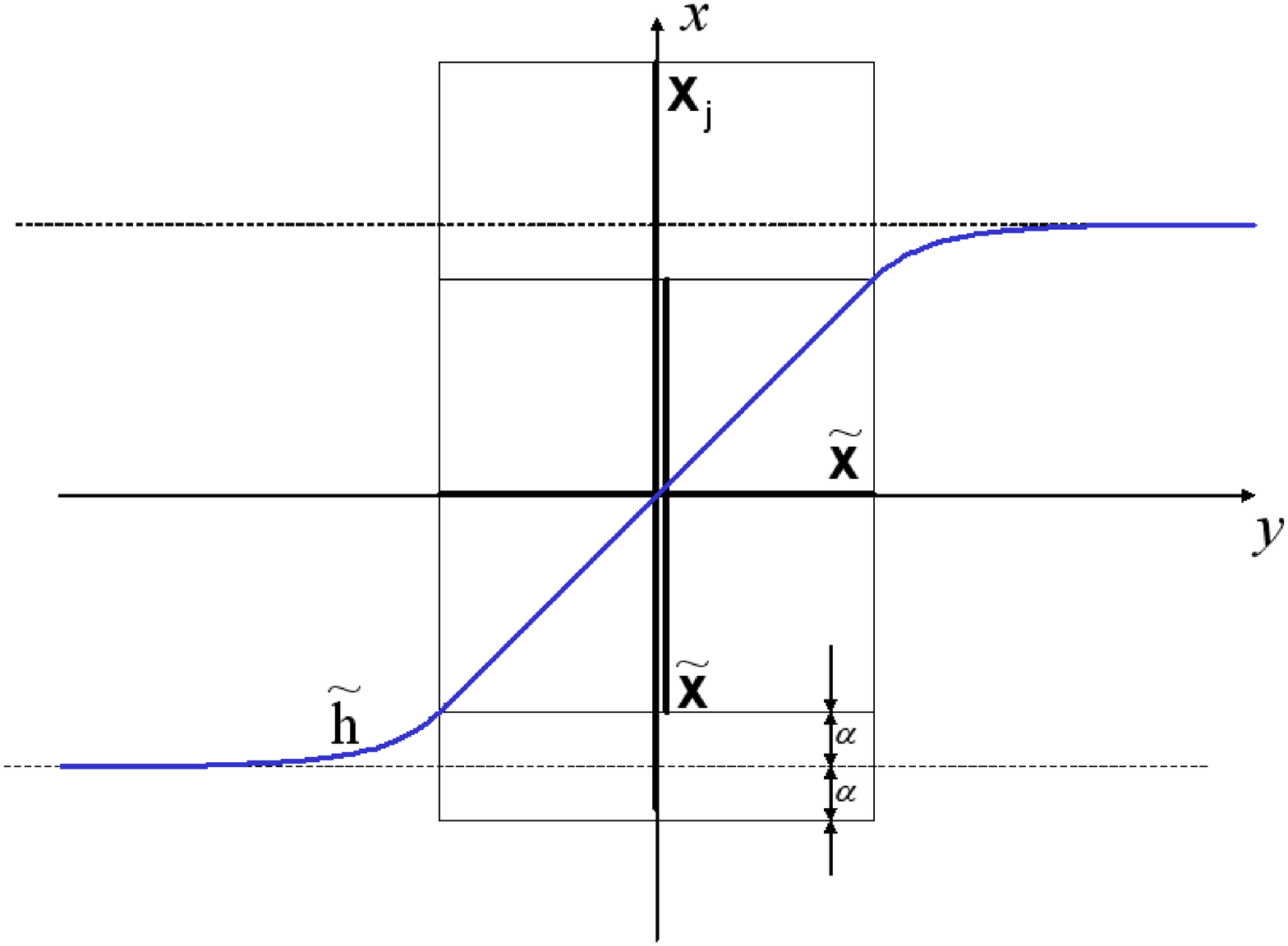}
			\caption{\label{fig:th}}
		\end{center}
	\end{figure}

\section{Continuous functions}

	Finally, the next proposition proves that continuous functions are ma-continuous.
	\begin{proposition}
		Let $f:\ix\subseteq\Rset^n\longrightarrow \Rset^m$ be a continuous function. Then, $F:\ix\subseteq\Rset^n\longrightarrow \Rset^m$ defined by $F(x)=\{f(x)\}$ is ma-continuous.
	\end{proposition}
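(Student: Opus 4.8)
The plan is to use the constant sequence of manifolds $M^{(k)}=\id$ and to let the $g^{(k)}$ be smooth approximations of the graph map $\tx\mapsto(\tx,f(\tx))^T$. The closed disk $\id$ is itself a $C^\infty$ compact $n$-manifold with $\partial\id=S^{n-1}$, so the only real work is to produce, for each $k$, a $C^\infty$ function $s^{(k)}:\id\longrightarrow\Rset^m$ such that $g^{(k)}(\tx)=(\tx,s^{(k)}(\tx))^T$ meets the boundary requirement and the two limit conditions of the definition. The case $n=0$ is trivial, since then ma-continuity only asks $F\neq\emptyset$.

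First I would extend and taper $f$. Because $\ix$ is compact and $\ix\subseteq\interior\id$ strictly, there is a continuous cut-off $\chi:\id\longrightarrow[0,1]$ equal to $1$ on $\ix$ and vanishing on a neighbourhood $U$ of $\partial\id$. Extending $f$ to a continuous function $\hat f$ on $\id$ by the Tietze theorem and setting $\bar f=\chi\,\hat f$ yields a continuous $\bar f:\id\longrightarrow\Rset^m$ that agrees with $f$ on $\ix$ and vanishes on $U$. The strict inclusion $\ix\subseteq\interior\id$ is exactly what makes room for this tapering without disturbing the values on $\ix$. Next I would smooth: extending $\bar f$ by $0$ outside $\id$ (which is continuous, as $\bar f$ already vanishes on $U$) and mollifying with radii $r_k\to 0$ produces $C^\infty$ functions $s^{(k)}$ with $s^{(k)}\to\bar f$ uniformly on $\id$. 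If the radii are chosen small enough that $B(\tx,r_k)\cap\id\subseteq U$ for every $\tx\in\partial\id$ and every $k$ --- possible by compactness of $\partial\id$ --- then each $s^{(k)}$ vanishes identically on $\partial\id$, so $g^{(k)}|_{\partial\id}(\tx)=(\tx,0)^T$ is a $C^\infty$ diffeomorphism onto $(\partial\id)\times\{0\}$, and the boundary condition holds.

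Finally I would verify the two limit conditions for an arbitrary sequence $\tx^{(k)}\in\id$. Boundedness of $g^{(k)}(\tx^{(k)})=(\tx^{(k)},s^{(k)}(\tx^{(k)}))^T$ is immediate, since $\id$ is bounded and the $s^{(k)}$ are uniformly bounded (they converge uniformly to the bounded function $\bar f$ on the compact set $\id$). For the accumulation condition, suppose $\tx^{(k)}\in\ix$ for all $k$ and let $(x^*,y^*)^T$ be an accumulation point of $g^{(k)}(\tx^{(k)})$; along a subsequence $\tx^{(k)}\to x^*\in\ix$ (as $\ix$ is closed) and $s^{(k)}(\tx^{(k)})\to y^*$. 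Because $s^{(k)}\to f$ uniformly on $\ix$ and $f$ is continuous, $s^{(k)}(\tx^{(k)})\to f(x^*)$, whence $y^*=f(x^*)\in F(x^*)$, as required.

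I expect the only delicate point to be the simultaneous demands on $s^{(k)}$ --- uniform approximation of $f$ on $\ix$ together with \emph{exact} vanishing on $\partial\id$ (needed for the boundary diffeomorphism) --- which is precisely what the cut-off-then-mollify construction resolves, the strict inclusion $\ix\subseteq\interior\id$ supplying the collar in which the tapering takes place.
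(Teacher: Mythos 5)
Your proposal is correct and follows essentially the same route as the paper: take the constant sequence of manifolds equal to the disk $\id$ (the paper uses $D^n$ with the canonical embedding), and let $g^{(k)}$ be the graph maps of $C^\infty$ approximations of a continuous extension of $f$ that vanishes on $\partial\id$. Your cut-off-then-mollify construction simply fills in the details that the paper delegates to Tietze-type existence (``such a continuous function certainly exists'') and to the approximation theorems in Hirsch's Section~2.
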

	\begin{proof}[Sketch of the proof]
		Consider a disk $\id\subseteq\Rset^n$ such that $\ix\subseteq\interior\id$. Define the continuous function $\tf:\id\longrightarrow\Rset^m$ in such a way that $\tf|_\ix=f$ and $\tf(\partial\id)=0$ --- such a continuous function certainly exists. As $\tf|_{\partial\id}$ is constant equal to zero, it is $C^\infty$. Therefore, $\tf$ can be approximated by $C^\infty$ functions $\tf^{(k)}:\id\longrightarrow\Rset^m$ which satisfy $\tf^{(k)}(\partial\id)=0$ --- check Hirsch\cite{Hirsch1976} section 2 for approximation theorems. Then $M^{(k)}=D^n$ and $g^{(k)}=\tf^{(k)}\circ i$ --- where $i:D^n\longrightarrow \id\times\{0\}$ is the canonical $C^\infty$ embedding of $D^n$ into $\Rset^n\times\{0\}$  --- satisfy the hypothesis of the definition of ma-continuity.
	\end{proof}

\section{Conclusion}

	The property COMP seems to be impossible to achieved in a direct way. However, PROD and PROJ seems to be possible to achieve, and the COMP would then be a consequence of these latter.

\bibliographystyle{plain}

\end{document}